\documentclass[12pt,oneside]{amsart}
\usepackage[foot]{amsaddr}
\usepackage[utf8]{inputenc}
\usepackage[T1]{fontenc}
\usepackage{lmodern}

\usepackage{amssymb}
\usepackage{geometry}
\usepackage{amsmath}
\usepackage{amsthm}
\usepackage{graphicx}
\usepackage{color}
\usepackage[normalem]{ulem}

\geometry{a4paper}
\usepackage{verbatim}
\usepackage[hidelinks]{hyperref}

\setlength{\parskip}{\medskipamount}

\newcommand{\IR}{\ensuremath{\mathbb{R}}}
\newcommand{\IN}{\ensuremath{\mathbb{N}}}

\newcommand{\B}{B_F}

\newcommand{\nall}{n^{\rm all}}
\newcommand{\nstd}{n^{\rm std}}
\newcommand{\nx}{n^{\rm x}}
\newcommand{\nstdlin}{n^{\text{\rm std-lin}}}

\newcommand{\eps}{\varepsilon}

\renewcommand{\rho}{\varrho}

\newcommand{\set}[1]{\left\{#1\right\}}

\DeclareMathOperator{\app}{APP}

\DeclareMathOperator{\rank}{rank}

\newtheorem{thm}{Theorem}
\newtheorem*{thm-intro}{Theorem}
\newtheorem{cor}[thm]{Corollary}

\theoremstyle{plain}
\newtheorem{lemma}[thm]{Lemma}

\theoremstyle{definition}

\newtheorem{rem}[thm]{Remark}

\title[Exponential tractability with function values]
{Exponential tractability of $L_2$-approximation \\ with function values}

\author{
David Krieg$^{1,2}$
\and
Paweł Siedlecki$^{3}$ 
\and 
Mario Ullrich$^{1}$
\and
Henryk Wo\'zniakowski$^{3,4}$
}

\address{$^1$ Institut f\"ur Analysis, 
Johannes Kepler Universit\"at, Linz, Austria}
\address{$^2$ Johann Radon Institute for Computational and Applied Mathematics (RICAM), Austrian Academy of Sciences, Linz, Austria.}
\address{$^3$ Faculty of Mathematics, Informatics and Mechanics, 
University of Warsaw, Warsaw, Poland} 
\address{$^4$ Department of Computer Science, 
Columbia University, New York, United States of America}
\email{
david.krieg@jku.at, 
psiedlecki@mimuw.edu.pl} 
\email{
mario.ullrich@jku.at, 
hwozniak@mimuw.edu.pl}

\date{\today}

\begin{document}
\sloppy

\begin{abstract}
We study the complexity of 
high-dimensional approximation 
in the $L_2$-norm when different classes of information are available;
we compare the power of function evaluations with the power of arbtirary continuous linear measurements.
Here, we discuss the situation when the number of 
linear measurements
required to achieve an error $\varepsilon \in (0,1)$ in dimension $d\in\IN$
depends only 
poly-logarithmically on $\eps^{-1}$. 
This corresponds to an exponential order of convergence of the approximation error, 
which often happens
in applications. 
However, it does not mean that the high-dimensional approximation problem is easy,
the main difficulty usually
lies within
the dependence on the dimension $d$.
We determine to which extent the required
amount of information changes if 
we allow only function evaluation instead of arbitrary linear information.
It turns out that in this case we only lose very little, and we can even 
restrict to linear algorithms. 
In particular, several notions of tractability hold simultaneously for both types 
of available information. 

\smallskip
\noindent \textbf{Keywords.} Approximation, Multivariate problems, Tractability, Complexity
\end{abstract}

\maketitle

\section{Exposition}

We want to approximate real- or complex-valued functions defined on some (nonempty) set $\mathcal{D}$, and belonging to a space $F$.  
We assume that $F$ is a separable Banach space of functions defined on $\mathcal{D}$, such that function evaluation $f\mapsto f(x)$ is continuous on $F$ for each $x\in\mathcal D$
and $F$ is continuously embedded in $L_2=L_2(\mathcal{D},\mu)$ for some 
measure $\mu$. 
Formally, the approximation problem is given as 
$$\app:F\to L_2, \quad \app(f):= f,$$
which might be understood as a continuous embedding into $L_2$. 
The class of all spaces $F$ satisfying the assumptions above will be 
denoted by $\mathcal{A}$. In particular, for each $F\in\mathcal{A}$ 
we have some associated nonempty set $\mathcal{D}$, 
measure $\mu$ on $\mathcal{D}$ and continuous embedding $\app$. 

We approximate $\app$ by using functionals from the class $\Lambda^\mathrm{std}$ consisting of all function evaluations, or from the class $\Lambda^\mathrm{all}=F^\ast$ of all continuous linear functionals. 

Below $\B$ denotes the closed unit ball in $F$. Let us define, for $n\in\IN$, the
\begin{itemize}
\item\emph{$n$-th linear sampling width} as
\begin{align*}
e_n(F,L_2) \,:=\, 
\inf_{\substack{x_1,\dots,x_n\in \mathcal{D}\\ \varphi_1,\dots,\varphi_n\in L_2}}\, 
\sup_{f\in \B}\, 
\Big\|f - \sum_{i=1}^n f(x_i)\, \varphi_i\Big\|_{L_2},
\end{align*}
\item\emph{$n$-th sampling width} as
\begin{align*}
g_n(F,L_2) \,:=\, 
\inf_{\substack{x_1,\dots,x_n\in \mathcal{D}\\ \phi\colon \IR^n \to L_2\\}}\, 
\sup_{f\in \B}\, 
\Big\|f - \phi(f(x_1),\dotsc,f(x_n)) \Big\|_{L_2},
\end{align*}
\item\emph{$n$-th linear width} as
\begin{align*}
a_n(F,L_2) \,:=\,
\inf_{\substack{T\colon L_2 \to L_2\\ \rank(T) \,\le\, n}}\, 
\sup_{f\in \B}\, 
\big\|f - Tf \big\|_{L_2},
\end{align*}
\item\emph{$n$-th Gelfand width} as
\begin{align*}
\label{def:cn}
c_n(F,L_2) \,:=\,
\inf_{\substack{\phi\colon \IR^n \to L_2\\ N\in (F^*)^n}}\, 
\sup_{f\in \B}\, 
\big\|f - \phi\circ N(f) \big\|_{L_2}.
\end{align*}
\end{itemize}
These quantities represent the \emph{minimal worst case errors} that 
can be achieved with linear or nonlinear algorithms using 
at most $n$ function values or linear measurements, respectively.

We also define the information-based complexity of the problem $\app$ 
for the classes $\Lambda^\mathrm{std}$ and $\Lambda^\mathrm{all}$, respectively, as the minimal number of evaluations from $\Lambda^\mathrm{std}$ or $\Lambda^\mathrm{all}$ necessary to obtain the absolute precision of approximation at most $\eps$, i.e., as 
\[
\nstd(\eps,F) \,:=\, \min\big\{n\colon g_n(F,L_2)\le\eps\big\}
\]
and
\[
\nall(\eps,F) \,:=\, \min\big\{n\colon c_n(F,L_2)\le\eps\big\}.
\]
Note that, since $g_n(F,L_2)\leq e_n(F,L_2)$, we have 
\[
\nstd(\eps,F) \, \leq \, \min\big\{n\colon e_n(F,L_2)\le\eps\big\}
\,\,=:\, \nstdlin(\eps,F),
\]
and all our upper bounds are proven
for $\nstdlin(\eps,F)$. 
There is a lot of literature on the size of these quantities 
for specific classes $F$. 
We refer to the monographs \cite{DTU18,NW08,NW10,NW12,Tem93,Tem18,W04} 
for more details and literature on the subject.

Here, we are specifically interested in the comparison of these quantities
for general classes $F$. 
That is, since $\nall(\eps,F)\le\nstd(\eps,F)$ is obvious for all $F\in\mathcal{A}$, 
we ask for an upper bound on $\nstd(\eps,F)$ based on knowledge of the function $\nall(\eps,F)$.
However, it is known that such a bound cannot hold without certain assumptions on 
$F$, see~\cite[Chapter~26]{NW12} and references therein, 
and even then, the involved ``constants'' depend in a non-trivial way on $F$. 
One approach to obtain qualitative statements on the relation of the complexities %, 
is to consider a whole sequence of spaces $(F_d)_{d\in\IN}$, 
where $d$ can be interpreted as the dimension of the underlying domain. 
We then assume a certain bound on $\nall(\eps,F_d)$, 
depending only on $\eps\in(0,1)$ and $d\in\IN$, 
and ask for an upper bound on $\nstd(\eps,F_d)$, 
hopefully not much worse than the bound on $\nall(\eps,F_d)$.

In the present paper, we allow 
arbitrary Banach spaces of functions $F_d$, 
but we assume that $\nall(\eps,F_d)$ depends only
poly-logarithmically on $\eps^{-1}$.
That is, we assume  
that there exist $A_d, B_d>0$ such that
\begin{equation}\label{eq:n}
\nall(\eps,F_d) \le A_d \left( 1 + \ln\eps^{-1}\right)^{B_d}
\qquad\text{for all } 0<\eps \le 1,
\end{equation}
%for some 
%%$C_d, \alpha_d>0$,
%$A_d, B_d>0$}
%\david{and all $\eps\in(0,1)$},
and study how this translates into bounds on $\nstd(\eps,F_d)$. 
Note that the above bound \eqref{eq:n} on the complexity implies that 
\begin{equation}\label{eq:c}
c_n(F_d,L_2)\leq e\, \exp(-(n/A_d)^{1/B_d})
\qquad\text{for all } n \ge A_d,
\end{equation}
whereas \eqref{eq:c} implies that \eqref{eq:n} holds with $+1$ added on the right hand side.
The assumption \eqref{eq:n} is therefore equivalent to the existence of a (possibly non-linear)
algorithm based on arbitrary linear information that converges exponentially fast. 
We will show that in this case,
we do not lose much when we only 
allow linear algorithms and function evaluations as information.
One of our main results may be stated as follows.

\begin{thm-intro}[see Corollary~\ref{cor:main}]
Assume that  
$F_d\in\mathcal{A}$ for every $d\in\IN$ and 
\begin{align*}
\nall(\eps,F_d)\,\leq\, c\, d^q\, (1+\ln\eps^{-1})^p
\end{align*} 
for some $p,c>0$, $q\geq 0$, and all $\eps\in (0,1)$. Then 
\begin{align*}
\nstdlin(\eps,F_d)\, \leq\, C\, d^q\, (1+\ln d)^p\, (1+\ln\eps^{-1})^p
\end{align*} 
for all $\eps\in (0,1)$ and $d\in\IN$, and some $C>0$ 
that depends only on $c$, $p$ and $q$. 
\end{thm-intro}

This shows that every Banach space that is 
assumed to be \emph{approximable in high dimensions} (in the above sense) 
with an exponential order by some algorithm and information  
can practically already be treated with linear algorithms 
based on function values.
In particular, 
this improves upon Theorem~26.21 from \cite{NW12}
and solves Open Problem~128 therein. 
Let us add that we do not know if the additional 
$(1+\ln d)^p$ is necessary.

There are many appearances of the 
assumption~\eqref{eq:n} in the 
literature. 
Besides the detailed study of certain weighted Hilbert spaces of 
analytic functions \cite{DKPW14,IKPW16,IKPW16b,KW19,Z21}, 
it appears naturally in the context of approximation with 
(increasingly flat) Gaussian kernels~\cite{FHW12,HR10,KS20,SW18}, 
or in tensor product approximations~\cite{GO16,HK07}, 
or for certain smoothness spaces on complex spheres~\cite{AT21}. 
Moreover, it is a typical assumption for the construction 
of greedy bases~\cite{BCDDPW11,BMPPT12,Ha13}.
Let us also add that there is quite some study on the \emph{stability} 
of algorithms that can achieve an exponential convergence, 
see~\cite{AHS14,AHM14,APS18,PTK11} for details.

When 
it comes to the study of the \emph{tractability} 
of the problem, i.e., the precise dependence of the error on the dimension, 
especially when we only allow function evaluations, 
there is much less to cite 
and we are only aware of the 
Hilbert space references from above.
As an explicit example, 
let us mention the Gaussian space on $\IR^d$ with reproducing kernel
$K(x,y)=\exp(-\Vert x-y\Vert_2^d)$,
which satisfies a relation of the form \eqref{eq:n}
for $L_2$-approximation with respect to the Gaussian probability measure $\mu$,
see \cite{SW18}.
In the Hilbert case, 
there are 
some general results
which make the situation somewhat simpler.
For example, it is known that linear algorithms are always optimal
and one may work with the singular value decomposition of the embedding $\app$.
We refer to 
\cite[Chapter~4]{NW08} and~\cite[Chapter~26]{NW12}.

A bit more is known in the case of  
\emph{algebraic tractability}, i.e., 
when the complexity depends polynomially on $\eps^{-1}$ %, 
instead of $\ln\eps^{-1}$.
In addition to general 
Hilbert space results, see %\david{as found, e.g., in}~
\cite[Chapter~26]{NW12},
and characterizations for weighted Korobov spaces, see~\cite{EP21},
%or~\cite{EP21} % for specific spaces, 
there are also quite sharp results for 
the classical smoothness spaces 
%$\mathcal{C}_d^k:=C^k([0,1]^d)$ 
$C^k(\Omega_d)$ 
of $k$-times differentiable functions on certain $d$-dimensional domains, 
possibly for $k=\infty$.
See~\cite{K19,NW09,X15} for details on approximation, 
and~\cite{HNU14,HNUW14,HNUW17,HPU19} 
for numerical integration in the same classes.
However, a comparison as proven here in the case of exponential convergence 
is not possible in this case, 
see the end of Section~\ref{sec:notions}.
In any case, 
it is open to determine the precise behavior 
of $\nstd(\eps,F_d)$ for most classical spaces,  
while $\nall(\eps,F_d)$ is more often known.

Our results are based on the following (special case of) Theorem~3 
from \cite{DKU22}, 
see also \cite{HKNU21,KU20,KU21,NSU20,Ull20}, 
which allows us to treat more general classes of functions.

\begin{thm}\label{thm:DKU}
For each $0<r<2$, 
there is a universal constant 
$b\in \IN$, 
%and a constant $c_r>0$, 
depending only on $r$, such that 
the following holds. 
For all $F\in\mathcal{A}$ 
and
$n\ge 2$, we have
 \[
 e_{b n}(F, L_2) \,\le\, 
\left(\frac1n \sum_{k\geq n} a_k(F, L_2)^r \right)^{1/r}.
 \]
\end{thm}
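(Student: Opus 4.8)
The plan is to prove the bound by exhibiting an explicit linear sampling scheme---a \emph{weighted least squares} fit onto a carefully chosen finite-dimensional subspace of $L_2$---and then to estimate its worst-case error by a tail of the approximation numbers $a_k(F,L_2)$. Since the error is measured in the Hilbert space $L_2$, the Banach structure of $F$ should enter only through the numbers $a_k(F,L_2)$, so I would first reduce to a Hilbert setting: for the relevant range of indices I would replace $F$ by an auxiliary Hilbert space $H\in\mathcal{A}$ with $\B\subseteq B_H$ (hence $e_{bn}(F,L_2)\le e_{bn}(H,L_2)$) whose approximation numbers dominate those of $F$ up to a loss that can be driven to $1$ by optimizing the construction. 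In the Hilbert case one has the singular value decomposition $\app=\sum_k a_k\,\scalar{\cdot}{e_k}\,\eta_k$, with $(e_k)$ orthonormal in $H$ and $(\eta_k)$ orthonormal in $L_2$; fixing $m\asymp n$ and $V_m:=\vspan\{\eta_1,\dots,\eta_m\}$ then yields a subspace with $\norm{f-P_{V_m}f}_{L_2}\le a_{m+1}$ for all $f\in B_H$, where $P_{V_m}$ is the $L_2$-orthogonal projection.

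Next I would set up importance sampling tuned to $V_m$. With $b(x):=(\eta_1(x),\dots,\eta_m(x))$, I would draw nodes $x_1,\dots,x_N$ independently from the probability density $\rho:=\tfrac1m\sum_{j=1}^m\abs{\eta_j}^2$ with respect to $\mu$, and reconstruct by weighted least squares with weights $\rho(x_i)^{-1}$. The point of this density is that the expected weighted rank-one matrices $\rho(x)^{-1}b(x)b(x)^\ast$ equal the identity on $V_m$, so the scheme recovers elements of $V_m$ stably as soon as the random Gram matrix $G:=\tfrac1N\sum_{i=1}^N\rho(x_i)^{-1}b(x_i)b(x_i)^\ast$ is well conditioned. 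A matrix Chernoff (or Bernstein) inequality, applicable because $\rho^{-1}\norm{b}_2^2\equiv m$ keeps every summand bounded, shows that for $N\gtrsim m\log m$ one has $\tfrac12 I\preceq G\preceq\tfrac32 I$ with high probability; conditioning on this event bounds the reconstruction operator uniformly.

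For the error I would split each $f\in B_H$ as $f=P_{V_m}f+(f-P_{V_m}f)$. The least squares map reproduces the first term, while the residual $g:=f-P_{V_m}f$ produces a stochastic error whose expected square, again by the identity $\rho^{-1}\norm{b}_2^2\equiv m$, is of order $\tfrac{m}{N}\norm{g}_{L_2}^2\le\tfrac{m}{N}a_{m+1}^2$. Combining this with the conditioning of $G$ and derandomizing the nodes gives the preliminary estimate
\[
e_{N}(F,L_2)\,\lesssim\,a_{m+1}\qquad\text{for } N\asymp m\log m .
\]
This already shows that function values are essentially as powerful as arbitrary linear information, but it loses a logarithmic oversampling factor and sees only the single number $a_{m+1}$ rather than the full tail.

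The hard part---responsible both for the \emph{universal} number of nodes $bn$ (linear in $n$, with the logarithm removed) and for the appearance of the $\ell_r$-tail with any $0<r<2$---is the final refinement. Here I would decompose the singular spectrum dyadically into blocks $[2^j,2^{j+1})$, assemble a single frame out of all the corresponding weighted samples, and then apply the Marcus--Spielman--Srivastava theorem, i.e.\ the solution of Weaver's $KS_2$ conjecture (the Kadison--Singer problem), to select \emph{deterministically} a subframe of size $bn$ with uniformly controlled frame bounds. The spectral slack provided by this subsampling is exactly what lets one sum the block contributions and replace the crude bound $a_{m+1}$ by the normalized tail $\brackets{\tfrac1n\sum_{k\ge n}a_k(F,L_2)^r}^{1/r}$, the restriction $r<2$ being dictated by the $\ell_2$-type nature of the frame-bound estimates. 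Tracking the constants through Weaver's theorem, organizing the dyadic summation so that the sum starts exactly at $k=n$ with constant one, and checking that the Hilbert-to-Banach reduction above does not spoil this normalization, is the most delicate and technical step of the whole argument.
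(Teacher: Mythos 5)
First, a remark on the ground truth: the paper does not prove Theorem~\ref{thm:DKU} at all --- it is imported as a special case of Theorem~3 of \cite{DKU22} --- so your proposal can only be measured against that external proof. Measured so, your sketch does identify the correct overall architecture: a reduction of the Banach class $F$ to an auxiliary Hilbert space, weighted least squares with nodes drawn from a density built from the singular functions, a matrix Chernoff bound for the empirical Gram matrix, and a final Weaver/Marcus--Spielman--Srivastava subsampling step to pass from $m\log m$ random nodes to $bn$ deterministic ones. These are indeed the ingredients of \cite{KU20,KU21,NSU20,DKU22}.

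However, two of your intermediate claims are not merely unproven but wrong, and repairing them is where the actual content of the theorem lies. (i) The ``preliminary estimate'' $e_N(H,L_2)\lesssim a_{m+1}$ for $N\asymp m\log m$ is false in general: there are reproducing kernel Hilbert spaces in $\mathcal{A}$ with $a_k\to 0$ but $e_n\not\to 0$ (this can happen whenever $\sum_k a_k^2=\infty$, by the examples of Hinrichs, Novak and Vybíral), so no bound on sampling widths by $a_{m+1}$ alone can hold. The flaw in your derivation is the step ``derandomizing the nodes'': you computed the expected squared error for each \emph{fixed} $f$, which is indeed of order $\tfrac{m}{N}\,\|g\|_{L_2}^2$, but $e_N$ requires one node set that works for \emph{all} $f\in B_H$ simultaneously, and $\sup_f$ and $\IE$ do not commute. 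Controlling the worst case forces an operator-norm bound on the infinite-dimensional random matrix assembled from the residuals $f-P_{V_m}f$ (and a sampling density that also flattens the tail part of the spectrum), and this unavoidably produces the tail $\bigl(\tfrac1m\sum_{k>m}a_k^2\bigr)^{1/2}$ in place of $a_{m+1}$; the tail is not an artifact of the final Weaver refinement but is already forced, and matched by lower bounds, at the random stage. (ii) The reduction to a Hilbert space $H$ with $\B\subseteq B_H$ and ``approximation numbers dominating those of $F$ up to a loss that can be driven to $1$'' cannot be achieved as stated: if it could, the Hilbert-space version of the result (which holds with exponent $2$) would transfer and give the theorem for $r=2$ for arbitrary $F\in\mathcal{A}$, whereas the restriction $r<2$ is essential to the argument of \cite{KU21,DKU22} and originates precisely in this reduction --- the auxiliary space constructed there has $a_k(H)$ controlled by a weighted tail of the $a_j(F)$, not by $a_k(F)$ itself. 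So the sketch names the right tools but mislocates where the difficulty, and the $\ell_r$-tail in the statement, actually come from.
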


\medskip

Additionally, we use the following fundamental result from~\cite{P78}, 
see also~ \cite{CW04,M90}.

\begin{thm}\label{thm:pietsch}
For all $F\in\mathcal{A}$ and $n\ge 1$, we have
\begin{equation*}\label{Pietsch}
 a_n(F, L_2) \,\le\, \left(1+\sqrt{n}\,\right) c_n(F,L_2).
\end{equation*} 
\end{thm}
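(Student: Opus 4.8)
The statement is the classical inequality of Pietsch between approximation numbers and Gelfand numbers for operators into a Hilbert space, so the plan is to identify $a_n(F,\Ltwo)$ and $c_n(F,\Ltwo)$ with the approximation and Gelfand numbers of the embedding $\app\colon F\to\Ltwo$, and then exploit the Hilbert structure of the target together with a projection-constant estimate. First I would translate the two widths into operator language. For the Gelfand width, pick near-optimal information $N=(\varphi_1,\dots,\varphi_n)\in(F^*)^n$ and a recovery $\phi$ with worst-case error at most $c_n(F,\Ltwo)+\delta$, and set $M:=\ker N$, a subspace of codimension at most $n$. Since $N$ vanishes on $M$, the recovery is the constant $\phi(0)$ on all of $\B\cap M$; applying the error bound to $f$ and to $-f$ and using the triangle inequality cancels this constant and yields $\norm{f}_{\Ltwo}\le c_n(F,\Ltwo)+\delta$ for every $f\in\B\cap M$, that is $\norm{\app|_M}\le c_n(F,\Ltwo)+\delta$. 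For the linear width, the Hilbert structure of $\Ltwo$ makes orthogonal projection optimal: for any $V\subseteq\Ltwo$ with $\dim V\le n$ the choice $T=P_V$ (orthogonal projection onto $V$) has rank at most $n$ and realizes the distance to $V$, so a near-optimal finite-rank approximation of $\app$ is admissible in the definition of $a_n(F,\Ltwo)$. Hence $a_n(F,\Ltwo)$ coincides with the genuine approximation number $\inf_{\rank A\le n}\norm{\app-A}$, and it suffices to produce a rank-$\le n$ operator $A$ with $\norm{\app-A}\le(1+\sqrt n)\,\norm{\app|_M}$.

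The heart of the matter is to construct such an $A$, and here I would pass to the adjoint $\app^*\colon\Ltwo\to F^*$, which now has a \emph{Hilbert source}. The bound $\norm{\app|_M}\le c:=c_n(F,\Ltwo)+\delta$ translates into a Kolmogorov-type statement for $\app^*$: with $V:=M^\perp=\vspan\{\varphi_1,\dots,\varphi_n\}$, a subspace of $F^*$ of dimension at most $n$, one has $\dist_{F^*}(\app^*h,V)=\norm{(\app^*h)|_M}=\sup_{f\in\B\cap M}\abs{\scalar{\app f}{h}}\le c\norm{h}$ for every $h\in\Ltwo$. To turn this distance information into a genuine rank-$\le n$ operator I would \emph{linearize the metric projection onto $V$} by a bounded linear projection: by the Kadec--Snobar theorem every $n$-dimensional subspace of a Banach space is the range of a projection $\Pi$ with $\norm{\Pi}\le\sqrt n$. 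Setting $A_0:=\Pi\app^*$ (rank $\le n$) and, for each $h$, choosing $v\in V$ with $\norm{\app^*h-v}\le c\norm{h}$, the identity $\Pi v=v$ gives $\app^*h-A_0h=(I-\Pi)(\app^*h-v)$, whence $\norm{\app^*-A_0}\le\norm{I-\Pi}\,c\le(1+\sqrt n)\,c$. Transposing back, and using that approximation numbers are invariant under passing to the adjoint, yields a rank-$\le n$ approximation of $\app$ with the same bound; letting $\delta\to0$ gives $a_n(F,\Ltwo)\le(1+\sqrt n)\,c_n(F,\Ltwo)$.

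The main obstacle is precisely the appearance of the factor $\sqrt n$, and locating where it must come from. On the original side it is \emph{invisible and unavailable}: knowing that $\app$ is small on the codimension-$n$ subspace $M$ says nothing about its size on a complement, because the norm of the oblique projection onto that complement is uncontrolled. The resolution is to move the relevant finite-dimensional object to the dual side, where it becomes an $n$-dimensional \emph{range} $V=M^\perp$ to be projected onto, and to pay exactly the projection constant $\sqrt n$ supplied by Kadec--Snobar. The Hilbert structure of $\Ltwo$ enters only through the first paragraph, guaranteeing that orthogonal projection is an optimal reconstruction so that the linear width really equals the approximation number; this is what lets the dual estimate be transported back to $\app$ without any further loss.
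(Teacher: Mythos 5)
The paper itself gives no proof of this theorem: it is imported from the literature (the citations [P78], [CW04], [M90] accompanying the statement), so there is no in-paper argument to compare yours against. Your proof is correct, and it is essentially the classical one: the symmetrization trick reduces the Gelfand width to $\|\app|_M\|$ for a closed subspace $M$ of codimension at most $n$; the Hahn--Banach duality $\dist_{F^*}(\psi, M^{\perp})=\|\psi|_M\|$ moves the problem to the $n$-dimensional annihilator $V=M^{\perp}\subseteq F^*$; and the Kadec--Snobar projection of norm at most $\sqrt{n}$ onto $V$ produces the factor $1+\sqrt{n}$. The identification of the paper's $a_n$ (defined via operators $T\colon L_2\to L_2$) with the operator approximation number of $\app$, via orthogonal projections onto the range, is indeed needed and you supply it.

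One step should be stated more carefully. ``Approximation numbers are invariant under passing to the adjoint'' is not a safe black box: $a_n(T^*)\le a_n(T)$ always holds, but the direction you need --- from a rank-$n$ approximation of $\app^*$ back to one of $\app$ --- fails for general targets (for compact operators it is Hutton's theorem). What saves you is that your construction is explicit: with $A_0=\Pi\app^*$ of rank at most $n$, the operator $A:=A_0^*\circ\iota_F$, i.e.\ the restriction of $A_0^*$ to $F\subseteq F^{**}$, maps into $L_2^{**}=L_2$ because $L_2$ is reflexive, has rank at most $n$, and satisfies $\|\app-A\|\le\|\app^*-A_0\|$. This is a second place where the structure of the target (reflexivity, at least) is genuinely used, so your closing remark that the Hilbert structure enters only through the optimality of orthogonal projections slightly understates its role. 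With that clarification the argument is complete.
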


\medskip

\begin{rem}\label{rem:non-con}
We would like to stress that the proof of Theorem~\ref{thm:DKU} in \cite{DKU22} is non-constructive, and we do not know 
how to explicitly construct evaluation points $x_1,\dotsc, x_{bn}\in \mathcal{D}$ 
together with some 
$\varphi_1,\dotsc,\varphi_{bn}\in L_2$ satisfying 
$$\sup_{f\in \B}\, \Big\|f - \sum_{i=1}^{bn} f(x_i)\, \varphi_i\Big\|_{L_2}\leq 
%c_r\, \sqrt{\ln n}\, 
\left(\frac1n \sum_{k\geq n} a_k(F, L_2)^r \right)^{1/r}.$$ 
However, for problems with known operators $T$ achieving the infimum as in 
the definition of linear widths $a_n(F,L_2)$, 
we are able to specify algorithms utilizing i.i.d. sampling of evaluation 
points from some known distribution, 
and satisfying inequalities similar to the one above with high probability,  
see Theorem 8 of \cite{KU21}. 
\end{rem}

%%%%%%%%%%%%%%%%%%%%%%%%%%%%%%%%%%%%

\section{Exponential tractability of approximation}
\label{sec:notions}

The notions of tractability are defined as follows. Let us fix, for every $d\in\IN$, some space $F_d\in\mathcal{A}$.  For each $F_d\in\mathcal{A}$ we have some associated set $\mathcal{D}_d$ equipped with a measure $\mu_d$, and a continuous embedding $\app_d:F_d\to L_2(\mathcal{D}_d,\mu_d)$.
The index $d\in\IN$ is an arbitrary parameter, but it usually stands for the dimension of the domain $\mathcal{D}_d$.
\emph{A multivariate approximation problem} is simply a sequence of embeddings 
$$\widetilde{\mathrm{APP}}=
\big(\mathrm{APP}_d:F_d\to L_2(\mathcal{D}_d,\mu_d)\big)_{d\in\IN}.$$ 

Moreover, tractability notions are defined relative to the considered class of information operations, i.e., we can consider tractability for 
$\Lambda^\mathrm{std}$ or $\Lambda^\mathrm{all}$. 
Therefore, for $\mathrm{x}\in\set{\mathrm{std},\mathrm{all}}$, we say that 
$\widetilde{\mathrm{APP}}$ is 
\begin{itemize}
\item \emph{exponentially strongly polynomially tractable} (EXP-SPT) for the class 
$\Lambda^\mathrm{x}$ if and only if  
$$\nx(\eps,F_d)\, \leq\, C\, (1+\ln\eps^{-1})^p$$
for some $C,p>0$ and for all $d\in\IN$ and $\eps\in (0,1)$, 
\item \emph{exponentially polynomially tractable} (EXP-PT) for the class 
$\Lambda^\mathrm{x}$ if and only if 
$$\nx(\eps,F_d)\, \leq\, C\, d^q\, (1+\ln\eps^{-1})^p$$
for some $C,p,q>0$ and for all $d\in\IN$ and $\eps\in (0,1)$,
\item \emph{exponentially quasi-polynomially tractable} (EXP-QPT) for the class 
$\Lambda^\mathrm{x}$ if and only if 
$$\nx(\eps,F_d)\, \leq\, C\, \exp(t(1+\ln d)(1+\ln(1+\ln\eps^{-1})))$$
for some $C,t>0$ and for all $d\in\IN$ and $\eps\in (0,1)$,  
\item \emph{exponentially uniformly weakly tractable} (EXP-UWT) for the class 
$\Lambda^\mathrm{x}$ if and only if for all 
$\alpha,\beta>0$  we have 
$$\lim_{d+\eps^{-1}\to\infty}\, \frac{\ln\nx(\eps,F_d)}{d^{\,\alpha}+(1+\ln\eps^{-1})^\beta}=0,$$ 
\item \emph{exponentially weakly tractable (EXP-WT)} for the class 
$\Lambda^\mathrm{x}$ if and only if 
$$\lim_{d+\eps^{-1}\to\infty}\, \frac{\ln\nx(\eps,F_d)}{d+(1+\ln\eps^{-1})}=0.$$
\end{itemize}

It is easy to see that we have the following logical relation between the tractability notions defined above 
$$\text{EXP-SPT}\implies \text{EXP-PT}\implies \text{EXP-QPT}\implies \text{EXP-UWT}\implies \text{EXP-WT}.$$

For a multivariate approximation problem we 
prove that \emph{exponential strong polynomial tractability} (EXP-SPT), 
\emph{exponential polynomial tractability} (EXP-PT), 
\emph{exponential uniform weak tractability} (EXP-UWT) 
and \emph{exponential weak tractability} (EXP-WT) for the class
$\Lambda^\mathrm{all}$ are each equivalent to the corresponding
tractability property for the class $\Lambda^\mathrm{std}$. 
Moreover, 
\emph{exponential quasi-polynomial tractability} (EXP-QPT) for 
$\Lambda^\mathrm{all}$ implies 
\emph{exponential uniform weak tractability} (EXP-UWT) for 
$\Lambda^\mathrm{std}$, i.e, the next tractability notion in the tractability hierarchy considered here. Whether the equivalence of 
\emph{exponential quasi-polynomial tractability} (EXP-QPT) for the classes $\Lambda^\mathrm{all}$ and $\Lambda^\mathrm{std}$ holds remains an open problem.

These equivalences are in sharp contrast to the results for 
algebraic tractability. 
See, e.g.,~\cite{HKNV21,NW16,V20} for examples 
where 
the problem is algebraically tractable for $\Lambda^\mathrm{all}$ but
the curse of dimensionality 
holds for $\Lambda^\mathrm{std}$.
In particular, \cite[Example~5]{NW16} shows that for
the tensor product $W_{2,d}^s$ of certain univariate periodic 
Sobolev spaces, $s>1/2$, 
we have QPT for $\Lambda^\mathrm{all}$, but the curse of dimensionality
for $\Lambda^\mathrm{std}$.

%%%%%%%%%%%%%%%%%%%%%%%%%%%%%%%%%%%%%%%%%%%%%%%%%%%
\section{Results} 

We now present our results. 
The first results are concerned with EXP-(S)PT and EXP-QPT. 
Both are direct corollaries of the following theorem.

\begin{thm}\label{thm:main1}
Assume that $F\in\mathcal{A}$ satisfies 
\begin{align*}
\nall(\eps,F) 
\,\le\, A\, \bigl( 1 + \ln\eps^{-1}\bigr)^B
\end{align*}
for some $B>0$ and $A\ge 1$ and all $\eps\in(0,1)$.  
Then    
\begin{align*}
\nstd(\eps,F) \,\le\, \nstdlin(\eps,F_d) 
\,\le\, C\, \bigl( 1 + \ln\eps^{-1}\bigr)^B
\end{align*}
for all $\eps\in(0,1)$,
where 
$$C= 3b\, A\, \left( \ln(36A)\,(1+B^3)\right)^B$$
and $b$ is the absolute constant from Theorem~\ref{thm:DKU} in the case $r=1$.
\end{thm}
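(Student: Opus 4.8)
The plan is to chain together the two structural results from the excerpt. We have Theorem~\ref{thm:pietsch} relating linear widths to Gelfand widths via $a_n \le (1+\sqrt n)\, c_n$, and Theorem~\ref{thm:DKU} (in the case $r=1$) bounding the linear sampling width $e_{bn}$ by a Cesàro-type average of the $a_k$. So the natural strategy is: first convert the hypothesis on $\nall(\eps,F)$ into a decay estimate on the Gelfand widths $c_n$, then into a decay estimate on the linear widths $a_n$, and finally feed that into Theorem~\ref{thm:DKU} to estimate $e_{bn}$, from which a bound on $\nstdlin(\eps,F)$ follows by inverting the error-versus-cardinality relationship.

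Let me lay out the key steps in order. As the excerpt already observes in~\eqref{eq:c}, the assumption $\nall(\eps,F)\le A(1+\ln\eps^{-1})^B$ is equivalent to a pure exponential decay $c_n(F,L_2)\le e\,\exp(-(n/A)^{1/B})$ for $n\ge A$. Next I would apply Theorem~\ref{thm:pietsch} to obtain
\begin{equation*}
a_k(F,L_2)\,\le\,(1+\sqrt k)\,c_k(F,L_2)\,\le\,(1+\sqrt k)\,e\,\exp\bigl(-(k/A)^{1/B}\bigr),
\end{equation*}
so the linear widths also decay exponentially, with only a polynomial factor lost. Then with $r=1$ I would use Theorem~\ref{thm:DKU} to write, for $n\ge 2$,
\begin{equation*}
e_{bn}(F,L_2)\,\le\,\frac1n\sum_{k\ge n} a_k(F,L_2)\,\le\,\frac{e}{n}\sum_{k\ge n}(1+\sqrt k)\,\exp\bigl(-(k/A)^{1/B}\bigr).
\end{equation*}
The main work is then to estimate this tail sum and show that, for a suitable choice of $n$ as a function of $\eps$, the right-hand side drops below $\eps$; reading off the smallest admissible $bn$ gives the claimed bound on $\nstdlin(\eps,F)$ with the explicit constant $C=3b\,A(\ln(36A)(1+B^3))^B$.

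The hard part will be the tail-sum estimate and extracting the precise constant. The summand $(1+\sqrt k)\exp(-(k/A)^{1/B})$ is not a simple geometric series because of the stretched-exponential exponent $(k/A)^{1/B}$, so I would bound the sum by a comparison integral or by exploiting that once $k$ is a moderate multiple of $A(1+\ln\eps^{-1})^B$ the exponential dominates the polynomial prefactor and the whole tail is controlled by its first term times a convergent geometric-type factor. The delicate points are keeping track of how the factor $(1+\sqrt k)$ interacts with the stretched exponential (this is where powers of $B$ such as the $1+B^3$ and the logarithmic factor $\ln(36A)$ enter), and choosing the threshold $n\asymp A(1+\ln\eps^{-1})^B$ so that the final cardinality $bn$ matches the target. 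Everything else is routine manipulation; the only genuine obstacle is doing the summation carefully enough to land on an explicit $C$ depending only on the structural constant $b$ together with $A$ and $B$ (and ultimately, after the passage to the dimension-dependent version in the corollary, only on $c$, $p$, $q$).
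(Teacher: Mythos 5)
Your outline follows the paper's proof exactly: deduce the exponential decay $c_n(F,L_2)\le e\exp(-(n/A)^{1/B})$ from the complexity hypothesis, upgrade to a bound on $a_n$ via Theorem~\ref{thm:pietsch}, feed this into Theorem~\ref{thm:DKU} with $r=1$, and control the tail sum by an integral comparison (the paper does this via an incomplete-gamma estimate and then a case split on $B\le 2$ versus $B>2$ to absorb the polynomial prefactor into the stretched exponential, which is where the $B_0^B$ and $\ln(36A)$ factors in $C$ arise). The strategy and the source of every ingredient of the constant are correctly identified; what you have deferred is only the routine, if tedious, computation.
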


\begin{proof}
Observe that the inequality 
$$\nall(\eps,F) \leq A \bigl( 1 + \ln\eps^{-1}\bigr)^B$$ 
implies that  
$$c_n(F,L_2)\leq e\, \exp(-(n/A)^{1/B}).$$
We obtain from Theorem~\ref{thm:pietsch}, 
and 
%$1+\sqrt{n}\le2\sqrt{n}$, 
$1+n^{1/2}\le2n^{1/2}$, 
that
$$a_n(F,L_2) \leq 2\, e\, n^{1/2}\, \exp(-(n/A)^{1/B}).$$ 
Applying first
Lemma~\ref{series-leq-integral} and then
Lemma~\ref{integral-leq-exp} from the Appendix,
we deduce that 
\begin{align*}
\sum_{k\geq n}a_k(F,L_2) &\leq 2\,e\sum_{k\geq n}k^{1/2}\exp(-(k/A)^{1/B})\\
&\leq 6\, A^{1/B}\, B\, \max(3B/2,1) \, (n-1)^{3/2-1/B}\, \exp(-((n-1)/A)^{1/B})
\end{align*} 
for all $n\geq n_0(A,B) := A\,\max(3B/2,1)^B+1$. 
In particular, $(a_n(F, L_2))\in\ell_1$. 
It follows from Theorem~\ref{thm:DKU} that there 
exists an absolute constant $b\in\IN$
such that 
\begin{align*}
e_{bn}(F,L_2) &\leq\, 
n^{-1}\, \sum_{k\geq n}a_k(F,L_2) \\
&\leq\, 6\, 
A^{1/B}\, B\, \max(3B/2,1) \, (n-1)^{1/2-1/B}\, \exp(-((n-1)/A)^{1/B})
\end{align*} 
for all $n\geq n_0(A,B)$.

In the case $B\le 2$ we have
$(n-1)^{1/2-1/B}\le A^{1/2-1/B}$, and thus
\[
 e_{bn}(F,L_2) \,\le\, 36\, A^{1/2} \, \exp(-((n-1)/A)^{1/B}).
\]
If $B>2$, then 
Lemma~\ref{nuexp-leq-exp} with $u=1/2-1/B$ yields for any 
$\delta>0$ 
that
\begin{align*}
e_{bn}(F,L_2)\leq 9 \, A^{1/2}\, B^2\, 
\delta^{1-B/2}\, \exp\left(((B/2-1)\delta-1)((n-1)/A)^{1/B}\right)
\end{align*}
and taking 
$\delta=2/B$ yields 
\begin{align*}
e_{bn}(F_d,L_2) \,\leq\, 
36\, A^{1/2} (B/2)^{B/2+1}
\exp\left(-\frac{2}{B} \left(\frac{n-1}{A}\right)^{1/B}\right).
\end{align*}
If we put $B_0 := \max\{B/2,1\}$, we have for all $B>0$
and $n\geq n_0(A,B)$ the bound
\[
e_{bn}(F_d,L_2) \,\leq\, 36\, A^{1/2} B_0^{B_0+1}
\exp\left(-\frac{1}{B_0} \left(\frac{n-1}{A}\right)^{1/B}\right)
\]
which is smaller than $\varepsilon$ if
\[
 n \,\ge\,  A\, B_0^B
\left(\ln \left(36\, A^{1/2} B_0^{B_0+1} \eps^{-1} \right)\right)^B +1.
\]
Thus 
\begin{align*}
\nstd(\eps, F_d) &\leq\, b\, \max \left\{ 
A\, B_0^B
\left(\ln \left( 36\, A^{1/2} B_0^{B_0+1}  \eps^{-1} \right)\right)^B+2, 
\ n_0(A,B)
\right\}\\
&\le\, 3b\, \max \left\{ 
A\, B_0^B
R^B\left(1 + \ln(\eps^{-1}) \right)^B, 
\ A\, (3B_0)^B
\right\} \\
& \le\, 3b\, A\, B_0^B R^B\left(1 + \ln(\eps^{-1}) \right)^B 
\end{align*}
with 
$$
R\,:=\,\ln (36) + \frac{\ln A}{2} + (B_0+1) \ln B_0 
\,\le\, \ln(36A)\, B_0^2
$$
which gives the desired estimate.\\
\end{proof}

\begin{cor}\label{cor:main}
Assume that  
$F_d\in\mathcal{A}$ for every $d\in\IN$ and 
\begin{align*}
\nall(\eps,F_d)\,\leq\, c\, d^q\, (1+\ln\eps^{-1})^p
\end{align*} 
for some $p,c>0$, $q\geq 0$, and all $\eps\in (0,1)$. Then 
\begin{align*}
\nstd(\eps,F_d)\, \leq\, \nstdlin(\eps,F_d)\, \leq\, C\, d^q\, (1+\ln d)^p\, (1+\ln\eps^{-1})^p
\end{align*} 
for all $\eps\in (0,1)$ and $d\in\IN$, and some $C>0$ 
that depends only on $c$, $p$ and $q$. 

In particular, if $\widetilde{\app}$ is exponentially (strongly) 
polynomially tractable for the class $\Lambda^\mathrm{all}$ 
then it is exponentially (strongly) polynomially tractable for 
$\Lambda^\mathrm{std}$.
\end{cor}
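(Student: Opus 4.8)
The plan is to apply Theorem~\ref{thm:main1} separately to each space $F_d$, letting the quantity $A$ in that theorem absorb the $d$-dependence, and then to simplify the resulting constant. Fix $d\in\IN$ and set $A:=\max\{1,c\,d^q\}$ and $B:=p$. Since $(1+\ln\eps^{-1})^p\ge 1$ and $A\ge c\,d^q$, the hypothesis $\nall(\eps,F_d)\le c\,d^q(1+\ln\eps^{-1})^p$ gives $\nall(\eps,F_d)\le A(1+\ln\eps^{-1})^B$ with $A\ge 1$ and $B>0$, exactly the form required by Theorem~\ref{thm:main1}. Invoking that theorem yields
\[
\nstdlin(\eps,F_d)\,\le\, 3b\,A\,\bigl(\ln(36A)\,(1+p^3)\bigr)^p\,(1+\ln\eps^{-1})^p ,
\]
while the first inequality $\nstd(\eps,F_d)\le\nstdlin(\eps,F_d)$ is the definition recorded in the exposition.

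It remains to turn the abstract constant into the advertised form. Here I would use that $d^q\ge 1$ for $d\ge1$ and $q\ge 0$, so that $A=\max\{1,c\,d^q\}\le(1+c)\,d^q$, and that $\ln(36A)\le\ln\bigl(36(1+c)\bigr)+q\ln d\le K\,(1+\ln d)$ for a constant $K$ depending only on $c$ and $q$. Substituting these two estimates collects all the $d$-dependence into the factor $d^q(1+\ln d)^p$ and leaves a prefactor $C:=3b(1+c)\bigl(K(1+p^3)\bigr)^p$ that depends only on $c$, $p$, $q$ (recall $b$ is the universal constant from Theorem~\ref{thm:DKU} for $r=1$). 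This gives the claimed inequality $\nstdlin(\eps,F_d)\le C\,d^q(1+\ln d)^p(1+\ln\eps^{-1})^p$. The only genuinely delicate point is cosmetic bookkeeping: the extra $(1+\ln d)^p$ is absent from the $\Lambda^{\mathrm{all}}$ bound and is produced precisely by the $\ln(36A)$ term inside the constant of Theorem~\ref{thm:main1} once $A$ is allowed to grow polynomially in $d$.

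Finally I would deduce the two tractability statements. For EXP-PT one has $q>0$ by definition, and since $(1+\ln d)^p$ is dominated by any positive power of $d$, the bound above becomes $\nstd(\eps,F_d)\le C'\,d^{2q}(1+\ln\eps^{-1})^p$, which is again of EXP-PT form. The EXP-SPT case, where the $d$-dependence must vanish entirely, is the one place where the general bound cannot be quoted verbatim: here $q=0$, so $c\,d^q=c$ and $A=\max\{1,c\}$ is a constant independent of $d$. Applying Theorem~\ref{thm:main1} with this constant $A$ directly gives $\nstdlin(\eps,F_d)\le C''(1+\ln\eps^{-1})^p$ with $C''$ independent of $d$, which is exactly EXP-SPT for $\Lambda^{\mathrm{std}}$. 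Thus the main obstacle is not any new estimate but the correct routing of the strongly polynomial case, so that the spurious $(1+\ln d)^p$ factor is avoided rather than merely absorbed.
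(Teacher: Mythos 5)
Your proof is correct and takes essentially the same approach as the paper, which simply invokes Theorem~\ref{thm:main1} with $A=c\,d^q+1$ and $B=p$ and leaves the bookkeeping implicit. Your explicit tracking of how $\ln(36A)$ produces the $(1+\ln d)^p$ factor, and your observation that the EXP-SPT case should be routed through the theorem with a $d$-independent $A$ rather than through the corollary's displayed bound, are accurate elaborations of what the paper's one-line proof already contains.
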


\begin{proof}
We use Theorem~\ref{thm:main1} with $A=c\, d^q +1$ and $B=p$. \\
\end{proof}

We now turn to the assumption that 
$\widetilde{\mathrm{APP}}$ is 
exponentially quasi-polynomially tractable 
for the class~$\Lambda^\mathrm{all}$. 
This is the only case where we do not know if it implies the same 
property for~$\Lambda^\mathrm{std}$.

For convenience, let us write $\ln_+(x):=1+\ln(x)$. 

\begin{cor}\label{exp-qpt-to-uwt}
Assume that  
$F_d\in\mathcal{A}$ for every $d\in\IN$ and
\begin{align*}
\nall(\eps,F_d)\,\leq\, c\, \exp\left( t\cdot \ln_+d\cdot \ln_+\ln_+\eps^{-1} \right)
\end{align*} 
for some $c,t>0$ and all $\eps\in (0,1)$. Then 
\begin{align*}
\nstd(\eps,F_d)\, \leq\, c\,  
\exp\left( t\cdot \ln_+d\cdot 
\Bigl(
\ln_+\ln_+\eps^{-1} \,+\, 4\ln\bigl(t\, \ln_+d\bigr) \,+\, C
\Bigr)
\right)
\end{align*} 
for all $\eps\in (0,1)$ and 
$d > (e+\frac1c)^{1/t}e^{-1}$, 
and some $C>0$ 
that depends only on $c$. 

In particular, 
if $\widetilde{\mathrm{APP}}$ is 
exponentially quasi-polynomially tractable 
for the class~$\Lambda^\mathrm{all}$, 
then it is exponentially uniformly weakly tractable for the 
class $\Lambda^{std}$.
\end{cor}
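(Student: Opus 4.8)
The plan is to reduce to Theorem~\ref{thm:main1} by treating the dimension $d$ as fixed and reading the quasi-polynomial hypothesis as a genuine power of $1+\ln\eps^{-1}$. Writing $L:=1+\ln\eps^{-1}$ and using $\ln_+\ln_+\eps^{-1}=1+\ln L$, the assumption becomes
\[
\nall(\eps,F_d)\,\le\, c\,\exp\!\big(t\,\ln_+ d\,(1+\ln L)\big)
\,=\, A_d\,(1+\ln\eps^{-1})^{B_d},
\qquad A_d:=c\,e^{t}\,d^{t},\quad B_d:=t\,\ln_+ d .
\]
First I would check that the stated range $d>(e+\tfrac1c)^{1/t}e^{-1}$ is exactly what forces $A_d\ge 1$ and $B_d\ge 1$: it rearranges to $e^{t}d^{t}>e+\tfrac1c$, whence $A_d=c\,e^t d^t>ce+1>1$ and $B_d=t\ln(ed)>\ln(e+\tfrac1c)>1$. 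With these two inequalities, Theorem~\ref{thm:main1} applies with $A=A_d$, $B=B_d$ and gives
\[
\nstd(\eps,F_d)\,\le\, C_d\,(1+\ln\eps^{-1})^{B_d},
\qquad
C_d=3b\,A_d\big(\ln(36A_d)(1+B_d^{3})\big)^{B_d}.
\]

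The second step is to recognize the asserted bound inside $C_d\,(1+\ln\eps^{-1})^{B_d}$. The decisive cancellation is $A_d\,e^{-B_d}=c\,e^t d^t\,e^{-t(1+\ln d)}=c$, so that, since $(1+\ln\eps^{-1})^{B_d}=e^{-B_d}\exp\!\big(B_d\,\ln_+\ln_+\eps^{-1}\big)$, we get $A_d\,(1+\ln\eps^{-1})^{B_d}=c\,\exp\!\big(t\,\ln_+ d\cdot\ln_+\ln_+\eps^{-1}\big)$. It then remains to absorb the leftover factor $3b\,(\ln(36A_d)(1+B_d^3))^{B_d}$ into $\exp\!\big(t\,\ln_+ d\,(4\ln(t\ln_+ d)+C)\big)$. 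Here I would use that $\ln(36A_d)=\ln(36c)+B_d$ is only \emph{linear} in $B_d$, so $\ln(36A_d)\le\gamma B_d$ with $\gamma:=1+\max\{0,\ln(36c)\}$, together with $1+B_d^3\le 2B_d^3$ (valid since $B_d\ge 1$). This yields $3b\,(\ln(36A_d)(1+B_d^3))^{B_d}\le 3b\,(2\gamma)^{B_d}B_d^{4B_d}\le\exp\!\big(B_d(4\ln B_d+C)\big)$ with $C:=\ln(6b\gamma)$, and substituting $B_d=t\ln_+ d$ reassembles precisely the claimed inequality. The exponent $4$ (hence the summand $4\ln(t\ln_+ d)$) is just the sum $1+3$ of the degrees of $\ln(36A_d)\sim B_d$ and $1+B_d^3\sim B_d^3$.

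The main obstacle is exactly this bookkeeping: one must confirm that although $A_d$ is exponentially large in $B_d$, the constant $C_d$ from Theorem~\ref{thm:main1} contributes only an \emph{additive} $d$-dependent term $4\ln(t\ln_+ d)+C$ to the exponent rather than a multiplicative blow-up. This hinges on $\ln(36A_d)$ being linear in $B_d$, and on $b$ being an absolute constant so that $C$ depends only on $c$.

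For the final ``in particular'' claim I would take logarithms, obtaining
\[
\ln\nstd(\eps,F_d)\le \ln c+t\,\ln_+ d\,\big(\ln_+\ln_+\eps^{-1}+4\ln(t\ln_+ d)+C\big),
\]
whose dominant term is $t\,(1+\ln d)\,(1+\ln(1+\ln\eps^{-1}))$. Fixing $\alpha,\beta>0$ and using $1+\ln d=o(d^{\alpha/2})$ and $1+\ln L=o(L^{\beta/2})$ together with the AM--GM bound $d^{\alpha/2}L^{\beta/2}\le\tfrac12(d^\alpha+L^\beta)$ (plus a short case split on which of $d,\eps^{-1}$ tends to infinity) shows that this product, and likewise the lower-order terms $4t\,\ln_+ d\,\ln(t\ln_+ d)$ and $Ct\ln_+ d$, are all $o(d^\alpha+(1+\ln\eps^{-1})^\beta)$. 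Hence the ratio defining EXP-UWT tends to $0$ and we obtain exponential uniform weak tractability for $\Lambda^{\mathrm{std}}$. I would finish by remarking that it is the same additive summand $4\ln(t\ln_+ d)$ that obstructs an analogous EXP-QPT conclusion, since for fixed $\eps$ it grows with $d$ relative to the constant $\ln_+\ln_+\eps^{-1}$.
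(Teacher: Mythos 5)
Your proposal is correct and follows essentially the same route as the paper: rewrite the quasi-polynomial bound as $A_d\,(1+\ln\eps^{-1})^{B_d}$ with $A_d=c\,e^t d^t$ and $B_d=t\ln_+ d$, verify $A_d,B_d\ge 1$ on the stated range of $d$, apply Theorem~\ref{thm:main1}, and absorb the constant $C_d$ using the fact that $\ln(36A_d)=\ln(36c)+B_d$ is only linear in $B_d$. Your explicit cancellation $A_d e^{-B_d}=c$ and the AM--GM argument for the EXP-UWT conclusion are slightly more detailed than the paper's, but the substance is identical.
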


\begin{proof}
Note that 
\begin{align*}
c\, \exp\left( t\, (1+\ln d)\, (1+\ln(1+\ln\eps^{-1})) \right)\, =\, 
c\, e^t \, d^{\,t} \, \left(1+\ln\eps^{-1}\right)^{t(1+\ln d)}.
\end{align*} 
Hence, we can apply Theorem~\ref{thm:main1} 
with $A=c\, e^t \, d^{\,t}$ and 
$B=t\, \ln_+ d$, i.e., $A=c\, e^B$.
Note that $A,B\ge1$ for 
$d > (e+\frac1c)^{1/t}e^{-1}$.
We obtain that there exists an absolute constant 
$b>0$ 
such that 
\begin{align*}
\nstd(\eps,F) 
\,\le\, C\, \bigl( 1 + \ln\eps^{-1}\bigr)^B
\end{align*}
for all $\eps\in(0,1)$,
with 
\[
\begin{split}
C\,&=\, 3b\, A\, \left( \ln(36A)\,(1+B^3)\right)^B
\le\, 3b\, A\, (2B)^{3B}\left( \ln(36c)+B\right)^B \\ 
&\le\, c\, \exp\left( B \Bigl(c'+4\ln(B)\Bigr) \right)
\end{split}
\]
where $c'>0$ only depends on $c$. 
This proves the bound.

Now, since $\ln\nstd(\eps,F_d)$ depends only logarithmically on $d$ 
and double-logarithmically on $\eps^{-1}$, we obtain 
$$\lim_{d+\eps^{-1}\to\infty}\, \frac{\ln\nstd(\eps,F_d)}{d^{\,\alpha}+(1+\ln\eps^{-1})^\beta}=0$$ 
for all $\alpha,\beta>0$, i.e., 
$\widetilde{\mathrm{APP}}$ is exponentially uniformly weakly tractable 
for the class~$\Lambda^\mathrm{std}$.\\
\end{proof}

We finally discuss EXP-UWT and EXP-WT.

\begin{thm}\label{thm:main2}
Assume that  
$F_d\in\mathcal{A}$ for every $d\in\IN$. If the problem $\widetilde{\mathrm{APP}}$ 
is exponentially (uniformly) weakly tractable for the class 
$\Lambda^\mathrm{all}$, then it is exponentially (uniformly) weakly tractable for the class~$\Lambda^\mathrm{std}$.
\end{thm}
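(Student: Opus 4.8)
The plan is to prove both statements (EXP-WT and EXP-UWT) by one mechanism: push the hypothesis through the chain $c_n\rightsquigarrow a_n\rightsquigarrow e_{bn}$ supplied by Theorems~\ref{thm:pietsch} and~\ref{thm:DKU}, and then invert to bound $\nstdlin$. Fix a small parameter $\tau>0$. In the EXP-WT case set $(\alpha',\beta'):=(1,1)$; in the EXP-UWT case, given target exponents $\alpha,\beta>0$, set $\alpha':=\alpha$ and $\beta':=\min\{\beta,\tfrac12\}$. The decisive point is that we always keep $\beta'\le1$, since this is what renders the Pietsch factor $\sqrt n$ harmless later. By the assumed tractability for $\Lambda^\mathrm{all}$, there is a threshold $M$ (depending on $\tau,\alpha',\beta'$ but not on $d,\eps$) such that
\[
\nall(\eps,F_d)\,\le\,\exp\!\Big(\tau\big(d^{\alpha'}+(1+\ln\eps^{-1})^{\beta'}\big)\Big)
\]
for all $(d,\eps)$ with $d+\eps^{-1}>M$.

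First I would translate this into a decay estimate for the Gelfand widths. Since $c_n(F_d,L_2)\le\eps$ as soon as $n\ge\nall(\eps,F_d)$, inverting the displayed bound gives
\[
c_n(F_d,L_2)\,\le\,\exp\!\Big(1-\big(\tfrac{\ln n}{\tau}-d^{\alpha'}\big)^{1/\beta'}\Big)
\]
once $n\ge\exp(\tau(d^{\alpha'}+1))$. Next I would apply Theorem~\ref{thm:pietsch} as $a_n(F_d,L_2)\le 2\sqrt n\,c_n(F_d,L_2)$ and observe that, because $1/\beta'\ge1$, the exponent $\tfrac12\ln n-(\tfrac{\ln n}{\tau}-d^{\alpha'})^{1/\beta'}$ tends to $-\infty$; hence $(a_n(F_d,L_2))\in\ell_1$ and the tail $\sum_{k\ge n}a_k(F_d,L_2)$ is controlled by a constant multiple of $n^{1/2}\exp(-(\tfrac{\ln n}{\tau}-d^{\alpha'})^{1/\beta'})$. (In the boundary case $\beta'=1$ of EXP-WT this requires $\tau<\tfrac23$, which is harmless since $\tau$ is taken small.) Feeding this into Theorem~\ref{thm:DKU} with $r=1$ yields
\[
e_{bn}(F_d,L_2)\,\le\,\frac1n\sum_{k\ge n}a_k(F_d,L_2)\,\le\,\exp\!\Big(\tfrac12\ln n-\big(\tfrac{\ln n}{\tau}-d^{\alpha'}\big)^{1/\beta'}\Big)\cdot(\text{const})
\]
for all sufficiently large $n$.

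It then remains to invert this. Solving $e_{bn}(F_d,L_2)\le\eps$ for $n$ and using subadditivity of $x\mapsto x^{\beta'}$ (valid as $\beta'\le1$) to absorb the $\tfrac12\ln n$ term into the linear-in-$\ln n$ contribution, I would reach a bound of the shape
\[
\ln\nstdlin(\eps,F_d)\,\le\,\ln b+2\tau\big(d^{\alpha'}+(1+\ln\eps^{-1})^{\beta'}\big)
\]
for all $(d,\eps)$ with $d+\eps^{-1}$ large. Since $\beta'\le\beta$ and $1+\ln\eps^{-1}\ge1$, we have $(1+\ln\eps^{-1})^{\beta'}\le(1+\ln\eps^{-1})^\beta$, so in the EXP-UWT case the right-hand side is at most $\ln b+2\tau(d^{\alpha}+(1+\ln\eps^{-1})^{\beta})$, while in the EXP-WT case $(\alpha',\beta')=(1,1)$ gives $\ln b+2\tau(d+1+\ln\eps^{-1})$ directly. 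Dividing by $d^\alpha+(1+\ln\eps^{-1})^\beta$ (respectively by $d+1+\ln\eps^{-1}$) and letting $d+\eps^{-1}\to\infty$, the additive $\ln b$ vanishes and the limit superior of the ratio is at most $2\tau$. As $\tau>0$ was arbitrary, the limit is $0$, which is precisely EXP-UWT (respectively EXP-WT) for $\Lambda^\mathrm{std}$.

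I expect the main obstacle to be the choice of $\beta'$ in the EXP-UWT case: one cannot take $\beta'=\beta$, because for $\beta>1$ the resulting decay of $c_n$ is too slow to survive multiplication by the Pietsch factor $\sqrt n$, so $a_n$ need not even be summable. Capping $\beta'$ at $\tfrac12$ restores summability, while $(1+\ln\eps^{-1})^{\beta'}\le(1+\ln\eps^{-1})^{\beta}$ guarantees no loss in the target denominator. Verifying that the two competing contributions (the $\tfrac12\ln n$ from Pietsch versus the stretched-exponential decay) combine correctly, and keeping all validity thresholds uniform in $d$ and $\eps$, is the part requiring the most care.
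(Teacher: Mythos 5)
Your proposal is correct and follows essentially the same route as the paper's own proof: translate the weak-tractability hypothesis into a decay bound on $c_n$, multiply by the Pietsch factor $\sqrt{n}$ from Theorem~\ref{thm:pietsch}, use that the effective exponent $\beta'\le 1$ makes the stretched exponential dominate $\sqrt{n}$ so that $(a_n)\in\ell_1$ with a controlled tail, feed this into Theorem~\ref{thm:DKU}, and invert; the paper implements the same exponent-splitting with a parameter $h\le 1/16$ where you use $\tau$ small, and reduces EXP-UWT to exponents in $(0,1)$ where you cap $\beta'$ at $\tfrac12$, which is an immaterial difference.
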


\begin{proof}
Assume that there are $0<\alpha,\beta\le 1$ such that
\begin{equation*}%\label{eq:weak-all}
\lim_{d+\eps^{-1}\to\infty}\, \frac{\ln\nall(\eps,F_d)}{d^\alpha+(1+\ln\eps^{-1})^\beta}=0.
\end{equation*}
It is enough to show that
\begin{equation}\label{eq:weak-std}
\lim_{d+\eps^{-1}\to\infty}\, \frac{\ln\nstd(\eps,F_d)}{d^\alpha+(1+\ln\eps^{-1})^\beta}=0.
\end{equation}
By assumption, for every $0<h\le 1/16$, there is some $v_0\in\IN$
such that 
$$0 \leq \frac{\ln\nall(\eps,F_d)}{d^\alpha+(1+\ln\eps^{-1})^\beta} \leq h$$ 
for all $\eps\in(0,1)$ and $d\in\IN$ with $d^\alpha+(1+\ln\eps^{-1})^\beta\ge v_0$.
It follows that 
$$c_n(F_d,L_2)\leq e \exp\left(-\left(\frac{\ln n}{h}-d^\alpha\right)^{1/\beta}\right)$$
for all $n\ge \exp(h v_0)$. 
From Theorem~\ref{thm:pietsch}, we get 
\[
a_n(F_d,L_2)\leq 2e \exp\left(-\left(\left(\frac{\ln n}{h}-d^\alpha\right)^{1/\beta}-\frac12 \ln n\right)\right).
\]
For all $n\ge \exp(2hd^\alpha)$ and $h\le 1/16$, we have
\[
 \left(\frac{\ln n}{h}-d^\alpha\right)^{1/\beta}-\frac12 \ln n
 \ge \frac12
 \left(\frac{\ln n}{h}-d^\alpha\right)^{1/\beta}+\frac{1}{8} \frac{\ln n}{h}
\]
and hence we have for all $n\ge \max\{\exp(hv_0),\exp(2hd^\alpha)\}$ that
\[
a_n(F_d,L_2)\leq 2e \exp\left(-\frac12 \left(\frac{\ln n}{h}-d^\alpha\right)^{1/\beta}\right) \cdot n^{-1/(8h)}.
\]
It follows from Theorem~\ref{thm:DKU} that
for some absolute constant $b\in\IN$ 
and all $n\ge \max\{\exp(hv_0),\exp(2hd^\alpha)\}$, we have
\begin{align*}
e_{bn}(F_d, L_2) &\leq \frac{1}{n} \sum_{k\geq n}a_k(F_d,L_2)
\leq \sum_{k\geq n}a_k(F_d,L_2)\\
&\leq 2e \exp\left(-\frac12 \left(\frac{\ln n}{h}-d^\alpha\right)^{1/\beta}\right) 
\sum_{k\ge n} k^{-1/(8h)} \\
&\le 2e \exp\left(-\frac12 \left(\frac{\ln n}{h}-d^\alpha\right)^{1/\beta}\right),
\end{align*}
where we again used that $h\le 1/16$.
It follows that 
\begin{align*}
 \nstd(\eps,F_d)\leq D \exp\left( 4h\left((1+\ln\eps^{-1})^\beta + d^\alpha\right) \right)
\end{align*} 
for some absolute constant $D>0$ and all $\eps\in(0,1)$ and $d\in\IN$ such that $d+\eps^{-1}$ is sufficiently large. This implies 
$$0 \leq \lim_{d+\eps^{-1}\to\infty}\, \frac{\ln\nstd(\eps,F_d)}{d^\alpha+(1+\ln\eps^{-1})^\beta} 
\leq 4 h.$$
Since $h\in(0,1/16)$ can be chosen arbitrarily close to $0$, 
we obtain \eqref{eq:weak-std}.\\
This allows us to conclude our statement. Indeed, for uniform weak tractability we take arbitrary $\alpha$ and 
$\beta$ from $(0,1)$, and for weak tractability we take $\alpha=\beta=1$.
\end{proof}

\section{Appendix: technical lemmas}

The following lemmas are used in the proofs of our results.

\begin{lemma}\label{series-leq-integral}
Let $A$ and $B$ be arbitrary positive real numbers. 
For $n\ge A(B/2)^B$ we have the following inequality
\begin{align*}
\sum_{k\geq n+1}k^{1/2}\exp(-(k/A)^{1/B})\leq \int_n^\infty t^{1/2}\exp(-(t/A)^{1/B})dt.
\end{align*}
\end{lemma}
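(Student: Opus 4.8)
The plan is to recognize this as a standard integral comparison for a monotone decreasing function, where the threshold $n \ge A(B/2)^B$ is precisely the point beyond which the summand becomes non-increasing. So the whole statement will follow once I pin down the range of monotonicity.

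First I would set $f(t) := t^{1/2}\exp(-(t/A)^{1/B})$ and study its monotonicity via the logarithmic derivative of $\ln f(t) = \tfrac12 \ln t - (t/A)^{1/B}$. Using $\frac{d}{dt}(t/A)^{1/B} = \frac{(t/A)^{1/B}}{Bt}$, a direct computation gives
\begin{align*}
\frac{d}{dt}\ln f(t) = \frac{1}{t}\left(\frac12 - \frac{(t/A)^{1/B}}{B}\right),
\end{align*}
which is $\le 0$ exactly when $(t/A)^{1/B} \ge B/2$, i.e. when $t \ge A(B/2)^B$. Hence $f$ is non-increasing on $[A(B/2)^B, \infty)$, and in particular on all of $[n,\infty)$ under the hypothesis $n \ge A(B/2)^B$.

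Next I would invoke the elementary integral-test bound. Since $f$ is non-increasing on $[n,\infty)$, for every integer $k \ge n+1$ and every $t \in [k-1,k]$ we have $f(k) \le f(t)$, so $f(k) \le \int_{k-1}^k f(t)\, dt$. Summing over $k \ge n+1$ and using that the intervals $[k-1,k]$ tile $[n,\infty)$ yields
\begin{align*}
\sum_{k \ge n+1} f(k) \,\le\, \sum_{k \ge n+1}\int_{k-1}^k f(t)\, dt \,=\, \int_n^\infty f(t)\, dt,
\end{align*}
which is exactly the claimed inequality.

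There is no serious obstacle: the only point requiring care is the range of monotonicity, and the hypothesis $n \ge A(B/2)^B$ is tailored precisely so that $f$ is decreasing on the entire half-line $[n,\infty)$. I would also briefly note that both the improper integral and the series converge, since the exponential factor dominates the polynomial factor $t^{1/2}$, so every manipulation above is legitimate.
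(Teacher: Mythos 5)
Your proof is correct and follows essentially the same route as the paper: both establish that $f(t)=t^{1/2}\exp(-(t/A)^{1/B})$ is non-increasing on $[A(B/2)^B,\infty)$ via the (logarithmic) derivative and then apply the standard integral comparison. The only difference is that you spell out the integral-test step explicitly, which the paper leaves implicit.
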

\begin{proof}
It is enough to show that the function $f:(0,\infty)\to \IR$ given by $f(t)=t^{1/2}\exp(-(t/A)^{1/B})$ is decreasing on  
$(A(B/2)^B,\infty)$. Indeed, for $t>A(B/2)^B$ we have 
\begin{align*}
f'(t)&=\left( \exp\left(\frac{1}{2}\ln(t)-(t/A)^{1/B}\right) \right)'=\\
&=\exp\left(\frac{1}{2}\ln(t)-(t/A)^{1/B}\right)\left( \frac{1}{2t}-(1/AB)(t/A)^{1/B-1} \right)<0.
\end{align*}
\end{proof}

\begin{lemma}\label{integral-leq-exp}
Let $A$ and $B$ be arbitrary positive real numbers. 
For every $n\ge A\,\max(3B/2,1)^B$ we have the following inequality
\begin{align*}
\int_n^\infty t^{1/2}\exp(-(t/A)^{1/B})dt \, \leq \,  
A^{1/B}\, B\, \max(3B/2,1) \, n^{3/2-1/B}\, \exp(-(n/A)^{1/B}).
\end{align*}
\end{lemma}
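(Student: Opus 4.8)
The plan is to reduce the integral to an upper incomplete gamma integral by a power substitution, and then to establish a clean tail bound for the latter by a monotonicity/comparison argument. First I would substitute $u=(t/A)^{1/B}$, so that $t=Au^B$, $dt=ABu^{B-1}\,du$ and $t^{1/2}=A^{1/2}u^{B/2}$. Writing $s:=3B/2$ and $c:=(n/A)^{1/B}$, this transforms the integral into
\[
\int_n^\infty t^{1/2}\exp(-(t/A)^{1/B})\,dt \,=\, A^{3/2}\,B\int_c^\infty u^{s-1}e^{-u}\,du.
\]
Since $n^{3/2-1/B}\exp(-(n/A)^{1/B})=A^{3/2-1/B}c^{s-1}e^{-c}$, the asserted inequality is equivalent to the tail estimate
\[
\int_c^\infty u^{s-1}e^{-u}\,du \,\le\, \max(s,1)\,c^{s-1}e^{-c},
\]
and the hypothesis $n\ge A\max(3B/2,1)^B$ is exactly the condition $c\ge\max(s,1)$.

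To prove the tail estimate I would split into two cases according to the sign of $s-1$. When $s\le1$ the factor $u^{s-1}$ is nonincreasing, so for $u\ge c\ge1$ we have $u^{s-1}\le c^{s-1}$ and hence $\int_c^\infty u^{s-1}e^{-u}\,du\le c^{s-1}\int_c^\infty e^{-u}\,du=c^{s-1}e^{-c}$, which is the claim since $\max(s,1)=1$. When $s>1$ (so that $\max(s,1)=s$ and $c\ge s$) I would compare the two sides directly by setting
\[
D(c)\,:=\,s\,c^{s-1}e^{-c}-\int_c^\infty u^{s-1}e^{-u}\,du.
\]
A short computation gives $D'(c)=(s-1)(s-c)\,c^{s-2}e^{-c}$, which is $\le0$ for all $c\ge s$, so $D$ is nonincreasing on $[s,\infty)$; since $D(c)\to0$ as $c\to\infty$, this forces $D(c)\ge0$ throughout, i.e. $\int_c^\infty u^{s-1}e^{-u}\,du\le s\,c^{s-1}e^{-c}$. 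Undoing the substitution then yields the stated bound with the exact constant $\max(3B/2,1)$.

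I expect the only delicate point to be the comparison step in the case $s>1$: one must get the sign of $D'$ right (the factors $(s-1)\ge0$ and $(s-c)\le0$ combine to make $D'\le0$ precisely on the range $c\ge s$ guaranteed by the hypothesis) and justify the boundary behavior $D(c)\to0$, which holds because both $s\,c^{s-1}e^{-c}$ and the gamma-type tail vanish as $c\to\infty$. Everything else is routine substitution and monotonicity, so no further obstacle is anticipated.
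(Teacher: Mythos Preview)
Your proof is correct and follows essentially the same route as the paper: both perform the substitution $u=(t/A)^{1/B}$ to reduce to the upper incomplete gamma tail $\int_c^\infty u^{s-1}e^{-u}\,du$ with $s=3B/2$, and both split into the cases $s\le 1$ (handled identically by monotonicity of $u^{s-1}$) and $s>1$. The only difference is that for $s>1$ the paper quotes the bound $\Gamma(a,x)\le a\,x^{a-1}e^{-x}$ from the literature, whereas you supply a direct derivative argument via $D(c)$; your computation $D'(c)=(s-1)(s-c)c^{s-2}e^{-c}$ is correct and yields the same inequality.
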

\begin{proof}
Using integration by substitution, with $u=(t/A)^{1/B}$, we obtain that 
$$
\int_n^\infty t^{1/2}\exp(-(t/A)^{1/B})dt \, = \, 
A^{3/2}\, B\, \Gamma(3B/2, (n/A)^{1/B})
$$
where, for $a\in\IR$ and $x>0$, $\Gamma(a,x)=\int_x^\infty v^{a-1}\exp(-v)dv$ is the incomplete gamma function.

It is known (see, e.g., Satz 4.4.3 in \cite{G79}) that for $a\geq 1$ and $x>a$ 
we have 
$$
\Gamma(a,x)\leq a\, x^{a-1}\, \exp(-x).
$$
If, on the other hand, $0 < a<1$ and $x>1$ then 
since $v^{a-1}\leq x^{a-1}$ for $v\geq x$ we have 
\begin{align*}
\Gamma(a,x)&=\int_x^\infty v^{a-1}\exp(-v)dv\leq x^{a-1}\int_x^\infty\exp(-v)dv=
x^{a-1}\exp(-x).
\end{align*}
Therefore, for every $a>0$ and $x>\max(a,1)$, the following bound holds
$$
\Gamma(a,x)\, \leq\, \max(a,1) \, x^{a-1}\, \exp(-x).
$$
Thus for $n>A\,\max(3B/2,1)^B$, and taking $a=3B/2$ and $x=(n/A)^{1/B}$, 
we have 
\begin{align*}
\int_n^\infty t^{1/2}\exp(-(t/A)^{1/B})dt \, \leq \,  
A^{1/B}\, B\, \max(3B/2,1) \, n^{3/2-1/B}\, \exp(-(n/A)^{1/B}).
\end{align*}
\end{proof}

\begin{lemma}\label{nuexp-leq-exp}
For every $A,B,n,\delta,u>0$ we have the following inequality 
\begin{align*}
n^u\, \exp\left(-(n/A)^{1/B}\right)\, 
\leq\, 
A^u\, \delta^{-uB}\, \exp\left((uB\delta-1)(n/A)^{1/B}\right).
\end{align*}
\end{lemma}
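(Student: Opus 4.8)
The plan is to reduce the stated inequality to the elementary fact that $\ln y \le y$ for all $y>0$ by means of a single substitution that absorbs all the parameters. The quantity $(n/A)^{1/B}$ appears in every exponential, so the natural move is to introduce
\begin{equation*}
x \,:=\, (n/A)^{1/B} \,>\, 0,
\end{equation*}
which gives $n = A\,x^B$ and hence $n^u = A^u\, x^{uB}$. Both sides of the claimed inequality carry a common positive factor $A^u\exp(-(n/A)^{1/B}) = A^u e^{-x}$ on the left and inside the exponential on the right, and dividing through by $A^u e^{-x}$ reduces the statement to
\begin{equation*}
x^{uB} \,\le\, \delta^{-uB}\, \exp\bigl(uB\,\delta\, x\bigr),
\qquad\text{i.e.}\qquad
(\delta x)^{uB} \,\le\, \exp\bigl(uB\,\delta\,x\bigr).
\end{equation*}

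Next I would set $m := uB > 0$ and $y := \delta x > 0$, so the target becomes simply $y^m \le e^{\,m y}$. Taking logarithms (permissible since both sides are positive) and dividing by $m>0$, this is equivalent to $\ln y \le y$, which holds for every $y>0$; indeed one has the sharper bound $\ln y \le y - 1$, but $\ln y \le y$ already suffices. Reassembling the substitutions $y = \delta x$ and $x = (n/A)^{1/B}$ then yields the desired inequality for all admissible values of the parameters, and one checks that every step is an equivalence or a multiplication by a strictly positive factor, so no sign issues arise.

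I do not expect any genuine obstacle here: once the substitution $x=(n/A)^{1/B}$ is made, the dependence on $A$ and on the precise exponent $1/B$ disappears entirely, and the inequality collapses to a one-variable convexity statement. The only point requiring a moment's care is bookkeeping of the exponents, namely verifying that the powers of $\delta$ and the factors $uB$ match up exactly after the reduction; this is a routine computation rather than a conceptual difficulty. It is worth noting that the lemma is stated with full generality in $A,B,\delta,u$ precisely so that it can be invoked with the choice $u = 1/2 - 1/B$ and $\delta = 2/B$ inside the proof of Theorem~\ref{thm:main1}.
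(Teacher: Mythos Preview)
Your proof is correct and follows essentially the same route as the paper: both arguments substitute for $(n/A)^{1/B}$ and reduce the inequality to the elementary bound $\ln y \le y$ for $y>0$, with the paper setting $x=\delta(n/A)^{1/B}$ directly where you introduce it in two steps. The only difference is cosmetic bookkeeping.
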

\begin{proof}
Let $x=\delta(n/A)^{1/B}$. Then $n^u=A^u\, \delta^{-uB}\, x^{uB}$. Using the fact that $\ln(x)\leq x$ for all $x>0$ we obtain that 
\begin{align*}
\ln(n^u) &=\ln(A^u\delta^{-uB}) + uB\ln(x)\leq\\ 
&\leq\,  \ln(A^u\delta^{-uB}) + 
uB x \, =\, \ln(A^u\delta^{-uB}) + 
uB\delta(n/A)^{1/B}
\end{align*}
 Hence, taking exponentials of both sides we derive that 
 \begin{align*}
 n^u\leq A^u\delta^{-uB}\exp(uB\delta(n/A)^{1/B})
\end{align*} 
and thus 
\begin{align*}
n^u\exp(-(n/A)^{1/B}) \leq A^u\delta^{-uB}\exp((uB\delta-1)(n/A)^{1/B})
\end{align*}
as claimed.
\end{proof}

\vskip 2pc

\noindent \textbf{Funding} David Krieg is supported by the Austrian Science Fund (FWF) Project F5506, which is part of the Special Research Program ``Quasi-Monte Carlo Methods: Theory and Applications''.

\vskip 1pc

\noindent \textbf{\large Declarations}

\vskip 1pc

\noindent \textbf{Conflict of interest} The authors declare no competing interests.

\bibliographystyle{plain}
\bibliography{bibIBC}

\end{document}